\documentclass[11pt]{article}
\usepackage[T1]{fontenc}
\usepackage[utf8]{inputenc}
\usepackage{lmodern}
\usepackage{microtype}
\usepackage[margin=1in]{geometry}
\usepackage{amsmath,amssymb,amsthm,mathtools}
\usepackage{enumitem}
\usepackage{xcolor}
\usepackage[colorlinks=true,linkcolor=blue,citecolor=blue,urlcolor=blue]{hyperref}

\newtheorem{theorem}{Theorem}
\newtheorem{proposition}[theorem]{Proposition}
\newtheorem{corollary}[theorem]{Corollary}
\newtheorem{remark}[theorem]{Remark}
\newtheorem{definition}[theorem]{Definition}

\newcommand{\E}{\mathbb{E}}
\newcommand{\R}{\mathbb{R}}

\title{Utility-Invariant Support Selection and Eventwise Decoupling\
for Simultaneous Independent Multi-Outcome Bets}
\author{Christopher D. Long}
\date{}

\begin{document}
\maketitle

\begin{abstract}
For simultaneous independent events with finitely many outcomes, consider the expected-utility problem with nonnegative wagers and an endogenous cash position. We prove a short support theorem for a broad class of strictly increasing strictly concave utilities. On any fixed support family and at any optimal portfolio with positive cash, summing the active first-order conditions and comparing that sum with cash stationarity yields the exact identity
\[
\frac{\lambda}{K_{\ell}^{(U)}}=\frac{1-P_{\ell,A}}{1-Q_{\ell,A}},
\]
where $P_{\ell,A}$ and $Q_{\ell,A}$ are the active probability and price masses of event $\ell$, $\lambda$ is the budget multiplier, and $K_{\ell}^{(U)}$ is the continuation factor seen by inactive outcomes of that event. Consequently, after sorting each event by the edge ratio $p_{\ell i}/\pi_{\ell i}$, the exact active support is the eventwise union of the single-event supports, and this support is independent of the utility function. The single-event utility-invariant support theorem is already explicit in the free-exposure pari-mutuel setting in Smoczynski and Miles; the point of the present note is that the simultaneous independent-events analogue follows from the same state-price geometry once the right continuation factor is identified.
\end{abstract}

\section{Introduction}
For a single multi-outcome event, the active-set structure of the full Kelly problem is classical; see, among others, Kelly \cite{Kelly1956}, Rosner \cite{Rosner1975}, Smoczynski and Tomkins \cite{SmoczynskiTomkins2010}, and Whelan \cite{Whelan2023}. In state-price language, cash is an implicit all-state position, so active wagers merely top up favorable outcomes beyond the cash floor; see Long \cite{Long2026}. For general increasing concave utility in the free-exposure pari-mutuel setting, Smoczynski and Miles \cite{SmoczynskiMiles2025} prove that the optimal set of horses is independent of utility.

The purpose of this note is to prove the simultaneous independent-events analogue in a short self-contained form. The decisive structural observation is that, for a fixed event, every inactive outcome sees the same background wealth from the rest of the portfolio. This common continuation factor is the multi-event analogue of the single-event cash floor. Once that object is named, the proof is a short stationarity-decomposition argument: sum the active first-order conditions, condition the cash stationarity equation on one event, and subtract.

The result isolates a clean division of labor. Utility curvature affects the \emph{active weights}, but not the \emph{support}. In the simultaneous independent problem, support selection decouples eventwise and is governed by exactly the same threshold $(1-P)/(1-Q)$ as in the single-event problem. This sharpens the structural interpretation of simultaneous Kelly-type optimization beyond the algorithmic literature on many simultaneous bets; see Whitrow \cite{Whitrow2007}.

\section{Model and admissible utilities}
Consider $m\ge 1$ independent events. Event $\ell$ has outcomes $i\in\{1,\dots,n_\ell\}$, subjective probabilities
\[
  p_{\ell i}>0,
  \qquad
  \sum_{i=1}^{n_\ell}p_{\ell i}=1,
\]
and state prices
\[
  \pi_{\ell i}>0.
\]
A portfolio consists of a cash position $c\ge 0$ and nonnegative wager sizes $g_{\ell i}\ge 0$, subject to the bankroll constraint
\begin{equation}\label{eq:budget}
  c+\sum_{\ell=1}^m\sum_{i=1}^{n_\ell}\pi_{\ell i}g_{\ell i}=1.
\end{equation}
If the realized product-state is $X=(X_1,\dots,X_m)$, terminal wealth is
\begin{equation}\label{eq:wealth}
  W(X)=c+\sum_{\ell=1}^m g_{\ell,X_\ell}.
\end{equation}
We restrict attention to portfolios satisfying
\[
  W(x)>0 \qquad \text{for every product-state }x.
\]

\begin{definition}
An \emph{admissible utility} is a function $U:(0,\infty)\to\R$ such that:
\begin{enumerate}[label=\textup{(U\arabic*)},leftmargin=2.4em]
\item $U\in C^1((0,\infty))$,
\item $U'(w)>0$ for every $w>0$,
\item $U$ is strictly concave on $(0,\infty)$.
\end{enumerate}
\end{definition}

The optimization problem is
\begin{equation}\label{eq:mainproblem}
  \max\; \Phi(c,g):=\E\bigl[U(W(X))\bigr]
  \quad\text{subject to \eqref{eq:budget}, } c\ge 0,\ g_{\ell i}\ge 0,\ W(x)>0\ \forall x.
\end{equation}
For each outcome define the edge ratio
\begin{equation}\label{eq:edge}
  r_{\ell i}:=\frac{p_{\ell i}}{\pi_{\ell i}}.
\end{equation}

A support family is a tuple $A=(A_1,\dots,A_m)$ with $A_\ell\subseteq\{1,\dots,n_\ell\}$. We say that $(c,g)$ is \emph{supported on $A$} if
\[
  g_{\ell i}>0 \text{ for } i\in A_\ell,
  \qquad
  g_{\ell i}=0 \text{ for } i\notin A_\ell.
\]
For fixed $\ell$, define the background wealth from all other events by
\begin{equation}\label{eq:Rell}
  R_\ell(X_{-\ell}) := c+\sum_{r\ne \ell} g_{r,X_r}.
\end{equation}
Then on the slice $\{X_\ell=i\}$ one has
\[
  W(X)=g_{\ell i}+R_\ell(X_{-\ell}).
\]
In particular, if $i\notin A_\ell$ then $g_{\ell i}=0$ and the slice wealth is exactly $R_\ell(X_{-\ell})$, independent of which inactive outcome is being tested.

\section{Fixed-support stationarity and reduced costs}
The next proposition records the only fixed-support facts needed for the theorem.

\begin{proposition}[Fixed-support first-order and reduced-cost conditions]\label{prop:kkt}
Fix a support family $A=(A_1,\dots,A_m)$, and let $(c,g)$ be an optimal portfolio for the restriction of \eqref{eq:mainproblem} to portfolios supported on $A$. Assume $c>0$. Then there exists a multiplier $\lambda>0$ such that
\begin{equation}\label{eq:cashstationarity}
  \lambda = \E\bigl[U'(W(X))\bigr],
\end{equation}
and, for every event $\ell$ and every active outcome $i\in A_\ell$,
\begin{equation}\label{eq:activestationarity}
  p_{\ell i}\,\E_{-\ell}\bigl[U'(g_{\ell i}+R_\ell(X_{-\ell}))\bigr]=\lambda\pi_{\ell i}.
\end{equation}
If $j\notin A_\ell$, define
\begin{equation}\label{eq:Kell}
  K_\ell^{(U)}:=\E_{-\ell}\bigl[U'(R_\ell(X_{-\ell}))\bigr].
\end{equation}
Then the one-sided directional derivative in the feasible direction $g_{\ell j}\mapsto g_{\ell j}+\varepsilon$ at $\varepsilon=0^+$ is
\[
  p_{\ell j}K_\ell^{(U)}-\lambda\pi_{\ell j},
\]
and optimality therefore forces the reduced-cost inequality
\begin{equation}\label{eq:reducedcost}
  p_{\ell j}K_\ell^{(U)}\le \lambda\pi_{\ell j}.
\end{equation}
In particular, for fixed $\ell$, every inactive outcome shares the same continuation factor $K_\ell^{(U)}$.
\end{proposition}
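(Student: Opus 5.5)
The plan is to eliminate the budget constraint by using cash as the slack variable, and then read off all three conditions as directional derivatives at an interior optimum. On the support family $A$, the feasible set is parametrized by the active wagers $(g_{\ell i})_{i\in A_\ell}$ alone, with
\[
c=1-\sum_{\ell}\sum_{i\in A_\ell}\pi_{\ell i}g_{\ell i}.
\]
Because $c>0$ and every active $g_{\ell i}>0$, the optimum is an interior point of the open set cut out by $c>0$, $g_{\ell i}>0$ $(i\in A_\ell)$ and $W(x)>0$ for all $x$. The state space is finite and every $W(x)$ is positive and depends affinely on the wagers. Since $U\in C^1$, the objective $\Phi$ is a finite sum of $C^1$ functions, so differentiation under the expectation needs no justification.

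\textbf{Cash and active conditions.} Define $\lambda:=\E[U'(W(X))]$. Then $\lambda>0$ by (U2), and \eqref{eq:cashstationarity} holds by definition. For an active outcome $i\in A_\ell$, consider the budget-preserving direction that raises $g_{\ell i}$ by $\varepsilon$ and lowers $c$ by $\pi_{\ell i}\varepsilon$. This direction is feasible for $\varepsilon$ of either sign when $|\varepsilon|$ is small. Along it, $W(x)$ changes by $\varepsilon\bigl(\mathbf 1\{x_\ell=i\}-\pi_{\ell i}\bigr)$, so the derivative of $\Phi$ at $\varepsilon=0$ is
\[
\E\bigl[U'(W)\mathbf 1\{X_\ell=i\}\bigr]-\pi_{\ell i}\E[U'(W)].
\]
Interior optimality makes this derivative vanish. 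By independence, conditioning on $X_\ell=i$ gives
\[
\E\bigl[U'(W)\mathbf 1\{X_\ell=i\}\bigr]=p_{\ell i}\,\E_{-\ell}\bigl[U'(g_{\ell i}+R_\ell(X_{-\ell}))\bigr],
\]
because on that slice $W=g_{\ell i}+R_\ell$ and $R_\ell$ depends only on $X_{-\ell}$. This yields \eqref{eq:activestationarity}.

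\textbf{Inactive outcomes.} For $j\notin A_\ell$, the same budget-preserving perturbation with $\varepsilon>0$ keeps $c>0$ and keeps $W(x)>0$ for small $\varepsilon$. It leaves the fixed support, so the comparison must be read as the statement says: relative to the problem in which $g_{\ell j}$ is allowed to become positive. The same computation gives a one-sided derivative at $0^+$ of $\E[U'(W)\mathbf 1\{X_\ell=j\}]-\lambda\pi_{\ell j}$. On the slice $X_\ell=j$ we have $g_{\ell j}=0$, so $W=R_\ell$, and independence turns the first term into $p_{\ell j}K_\ell^{(U)}$. The resulting expression is independent of $j$ except through $p_{\ell j}$ and $\pi_{\ell j}$, which gives the shared-continuation-factor remark.

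\textbf{Main obstacle: deriving \eqref{eq:reducedcost}.} The inequality $p_{\ell j}K_\ell^{(U)}\le\lambda\pi_{\ell j}$ requires that no improving feasible direction exists. A fixed-support optimum alone does not guarantee this, since the direction leaves the support. I would handle this by reading ``optimality'' in the statement as optimality in the full problem \eqref{eq:mainproblem} at a portfolio whose support is $A$. Under that reading, the one-sided derivative must be $\le 0$, which is exactly \eqref{eq:reducedcost}. Alternatively, one can state \eqref{eq:reducedcost} as the condition under which the fixed-support optimum is globally optimal, using strict concavity to make that condition sufficient as well. I would make this interpretation explicit in the proof.
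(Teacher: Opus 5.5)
Your argument is correct, and it reaches the paper's formulas by a somewhat different mechanism. The paper writes a Lagrangian and invokes standard Kuhn--Tucker conditions, justified by the claim that the restricted objective is strictly concave. It takes $\lambda$ to be the budget multiplier and reads off \eqref{eq:cashstationarity} and \eqref{eq:activestationarity} as coordinatewise stationarity. For an inactive $j$ it differentiates the Lagrangian in the bare coordinate direction $g_{\ell j}\mapsto g_{\ell j}+\varepsilon$.

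You instead eliminate the budget through cash and define $\lambda:=\E[U'(W(X))]$ outright, so \eqref{eq:cashstationarity} holds by definition and $\lambda>0$ is immediate from (U2). You then differentiate along the cash-funded direction: $+\varepsilon$ on outcome $(\ell,i)$ and $-\pi_{\ell i}\varepsilon$ in cash. This gives a self-contained proof with no multiplier theorem. It also makes the direction named in the statement genuinely feasible, since the bare coordinate move breaks the budget. Indeed, $p_{\ell j}K_\ell^{(U)}-\lambda\pi_{\ell j}$ is exactly the derivative of $\Phi$ along your cash-funded move.

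Your route also avoids the paper's strict-concavity claim. That claim is not needed for first-order necessity, and it is false in general. If every outcome of some event is active and $\sum_i\pi_{\ell i}=1$, shifting $\delta$ from cash into each outcome of that event leaves every $W(x)$ unchanged. For the same reason, your alternative sufficiency remark needs only plain concavity of $\Phi$, not strict concavity.

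The obstacle you flag for \eqref{eq:reducedcost} is genuine, and the paper's proof passes over it with ``optimality implies.'' In the restricted problem the inactive variables are pinned at zero and carry no sign condition. Under the literal hypothesis the inequality can fail. Take $m=1$ and $A_1=\emptyset$. Then $c=1$, $g=0$ is the only portfolio supported on $A$, so it is trivially optimal for the restriction. But $\lambda=K_1^{(U)}=U'(1)$ turns \eqref{eq:reducedcost} into $p_{1j}\le\pi_{1j}$, which is false whenever some $r_{1j}>1$.

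Your repair is the right one: require optimality, even just local, in the full problem \eqref{eq:mainproblem} at a portfolio with support $A$. This matches how the paper actually uses \eqref{eq:reducedcost} in Corollary~\ref{cor:decoupling}. Theorem~\ref{thm:main} needs only \eqref{eq:cashstationarity} and \eqref{eq:activestationarity}, and you correctly derive those from fixed-support optimality alone.
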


\begin{proof}
Restricting to support family $A$ means that the variables $g_{\ell i}$ with $i\notin A_\ell$ are fixed at zero, while $c$ and the active variables $g_{\ell i}$ with $i\in A_\ell$ vary subject to \eqref{eq:budget}. Because $U$ is strictly concave and wealth is affine in the decision variables, the restricted objective is strictly concave. Standard Kuhn-Tucker conditions therefore apply.

Write the Lagrangian
\[
  L(c,g;\lambda)=\Phi(c,g)-\lambda\left(c+\sum_{\ell,i}\pi_{\ell i}g_{\ell i}-1\right).
\]
Because $c>0$, complementary slackness yields stationarity with respect to $c$. Since $\partial W(X)/\partial c=1$ in every state,
\[
  \frac{\partial \Phi}{\partial c}=\E\bigl[U'(W(X))\bigr],
\]
which gives \eqref{eq:cashstationarity}.

If $i\in A_\ell$, only the slice $\{X_\ell=i\}$ contributes to the derivative with respect to $g_{\ell i}$, so
\[
  \frac{\partial \Phi}{\partial g_{\ell i}}
  =p_{\ell i}\,\E_{-\ell}\bigl[U'(g_{\ell i}+R_\ell(X_{-\ell}))\bigr].
\]
Stationarity in the active variable $g_{\ell i}$ yields \eqref{eq:activestationarity}.

If $j\notin A_\ell$, then $g_{\ell j}=0$. The directional derivative of the objective in the feasible direction $g_{\ell j}\mapsto g_{\ell j}+\varepsilon$ at $\varepsilon=0^+$ equals
\[
  p_{\ell j}\,\E_{-\ell}\bigl[U'(R_\ell(X_{-\ell}))\bigr]=p_{\ell j}K_\ell^{(U)}.
\]
The corresponding directional derivative of the Lagrangian is therefore $p_{\ell j}K_\ell^{(U)}-\lambda\pi_{\ell j}$, and optimality implies \eqref{eq:reducedcost}. The common-factor claim follows directly from \eqref{eq:Kell}.
\end{proof}

\section{The threshold identity}
For a support family $A$, define the active probability and price masses of event $\ell$ by
\begin{equation}\label{eq:PQgeneral}
  P_{\ell,A}:=\sum_{i\in A_\ell}p_{\ell i},
  \qquad
  Q_{\ell,A}:=\sum_{i\in A_\ell}\pi_{\ell i}.
\end{equation}
If the outcomes of event $\ell$ are sorted so that
\[
  r_{\ell 1}\ge r_{\ell 2}\ge \cdots \ge r_{\ell n_\ell},
\]
and $A_\ell=\{1,\dots,k_\ell\}$ is a prefix, we abbreviate
\[
  P_{\ell,k_\ell}:=\sum_{i\le k_\ell}p_{\ell i},
  \qquad
  Q_{\ell,k_\ell}:=\sum_{i\le k_\ell}\pi_{\ell i}.
\]

\begin{theorem}[Utility-invariant threshold identity]\label{thm:main}
Fix a support family $A=(A_1,\dots,A_m)$, and let $(c,g)$ be optimal for the restriction of \eqref{eq:mainproblem} to portfolios supported on $A$. Assume $c>0$. Then for every event $\ell$,
\begin{equation}\label{eq:identity}
  \frac{\lambda}{K_\ell^{(U)}}=\frac{1-P_{\ell,A}}{1-Q_{\ell,A}},
\end{equation}
where $\lambda$ is the multiplier from Proposition \ref{prop:kkt} and $K_\ell^{(U)}$ is defined by \eqref{eq:Kell}.
\end{theorem}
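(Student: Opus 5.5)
The plan is to prove \eqref{eq:identity} in cross-multiplied form,
\[
  \lambda\,(1-Q_{\ell,A})=(1-P_{\ell,A})\,K_\ell^{(U)},
\]
and then divide. The idea is to evaluate the cash stationarity condition \eqref{eq:cashstationarity} by conditioning on the outcome of event $\ell$ alone, and to split the resulting sum into active and inactive outcomes of that event.

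\textbf{Step 1 (condition cash stationarity on event $\ell$).} By independence of the events, the law of $X_{-\ell}$ given $X_\ell=i$ is the same for every $i$. On the slice $\{X_\ell=i\}$ we have $W(X)=g_{\ell i}+R_\ell(X_{-\ell})$. Hence
\[
  \lambda=\E\bigl[U'(W(X))\bigr]=\sum_{i=1}^{n_\ell}p_{\ell i}\,\E_{-\ell}\bigl[U'(g_{\ell i}+R_\ell(X_{-\ell}))\bigr].
\]

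\textbf{Step 2 (split the sum).} For $i\in A_\ell$, the active stationarity condition \eqref{eq:activestationarity} of Proposition \ref{prop:kkt} replaces the $i$-th term by $\lambda\pi_{\ell i}$. Summing these gives $\lambda Q_{\ell,A}$.

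For $i\notin A_\ell$, we have $g_{\ell i}=0$. The term is then $p_{\ell i}\,\E_{-\ell}[U'(R_\ell)]=p_{\ell i}K_\ell^{(U)}$, which is the common-factor observation at the end of Proposition \ref{prop:kkt}. Summing these gives $(1-P_{\ell,A})K_\ell^{(U)}$, using $\sum_i p_{\ell i}=1$.

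Therefore $\lambda=\lambda Q_{\ell,A}+(1-P_{\ell,A})K_\ell^{(U)}$, which is the cross-multiplied identity.

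\textbf{Step 3 (justify the division).} This is the only delicate point; the algebra above is routine.

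First, $K_\ell^{(U)}$ is well defined and positive. Indeed $R_\ell\ge c>0$ because all wagers are nonnegative, and $U'>0$ by (U2).

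Now suppose $A_\ell$ is a proper subset of the outcomes. Then $1-P_{\ell,A}>0$, since every $p_{\ell i}>0$. The right-hand side $(1-P_{\ell,A})K_\ell^{(U)}$ is therefore strictly positive. Since $\lambda>0$, this forces $1-Q_{\ell,A}>0$, so both sides may be divided by $K_\ell^{(U)}(1-Q_{\ell,A})$ to obtain \eqref{eq:identity}.

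Finally, suppose $A_\ell$ is the full outcome set. Then $P_{\ell,A}=1$, and the identity forces $Q_{\ell,A}=1$. In that case \eqref{eq:identity} can only be read in its cross-multiplied form. I would state this degenerate case explicitly, noting that with positive cash it can only occur when the state prices of event $\ell$ sum to one.
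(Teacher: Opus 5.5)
Your proof is correct and follows the paper's argument: condition cash stationarity on $X_\ell$, replace the active terms via the active first-order conditions to get $\lambda Q_{\ell,A}$, collect the inactive terms as $(1-P_{\ell,A})K_\ell^{(U)}$, and rearrange. Your Step 3 tightens a point the paper leaves implicit, since the paper divides only by $K_\ell^{(U)}>0$ and never checks $1-Q_{\ell,A}\neq 0$. You show that $1-Q_{\ell,A}>0$ whenever $A_\ell$ is a proper subset, and that when $A_\ell$ is the full outcome set the ratio degenerates to $0/0$ (forcing $\sum_i\pi_{\ell i}=1$), so only the cross-multiplied form is meaningful there.
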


\begin{proof}
Fix an event $\ell$. Summing the active stationarity equations \eqref{eq:activestationarity} over $i\in A_\ell$ gives
\begin{equation}\label{eq:activesum}
  \lambda Q_{\ell,A}
  =
  \sum_{i\in A_\ell}p_{\ell i}\,\E_{-\ell}\bigl[U'(g_{\ell i}+R_\ell(X_{-\ell}))\bigr].
\end{equation}
On the other hand, conditioning the cash stationarity equation \eqref{eq:cashstationarity} on $X_\ell$ yields
\begin{align}
  \lambda
  &= \sum_{i\in A_\ell}p_{\ell i}\,\E_{-\ell}\bigl[U'(g_{\ell i}+R_\ell(X_{-\ell}))\bigr]
     +\sum_{j\notin A_\ell}p_{\ell j}\,\E_{-\ell}\bigl[U'(R_\ell(X_{-\ell}))\bigr] \notag\\
  &= \sum_{i\in A_\ell}p_{\ell i}\,\E_{-\ell}\bigl[U'(g_{\ell i}+R_\ell(X_{-\ell}))\bigr]
     +(1-P_{\ell,A})K_\ell^{(U)}.\label{eq:cashdecomp}
\end{align}
Substituting \eqref{eq:activesum} into \eqref{eq:cashdecomp} gives
\[
  \lambda = \lambda Q_{\ell,A} + (1-P_{\ell,A})K_\ell^{(U)},
\]
which rearranges to
\[
  \lambda(1-Q_{\ell,A})=(1-P_{\ell,A})K_\ell^{(U)}.
\]
Dividing by $K_\ell^{(U)}>0$ proves \eqref{eq:identity}.
\end{proof}

\begin{corollary}[Eventwise prefix support and exact decoupling]\label{cor:decoupling}
Assume, in addition, that each event is sorted so that
\[
  r_{\ell 1}\ge r_{\ell 2}\ge \cdots \ge r_{\ell n_\ell}.
\]
Let $(c,g)$ solve the simultaneous problem \eqref{eq:mainproblem} with $c>0$. Then for each event $\ell$ there exists an integer $k_\ell\in\{0,1,\dots,n_\ell\}$ such that
\[
  g_{\ell i}>0 \text{ for } i\le k_\ell,
  \qquad
  g_{\ell i}=0 \text{ for } i>k_\ell.
\]
Moreover,
\begin{equation}\label{eq:stoprule}
  r_{\ell,k_\ell+1}\le \frac{1-P_{\ell,k_\ell}}{1-Q_{\ell,k_\ell}}
  \qquad (k_\ell<n_\ell),
\end{equation}
and the exact support of event $\ell$ is the same support obtained by solving the corresponding single-event problem with the same probabilities and prices. Hence the simultaneous exact support is the eventwise union of the single-event supports.
\end{corollary}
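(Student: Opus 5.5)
The plan is to apply Proposition \ref{prop:kkt} and Theorem \ref{thm:main} to the global optimum and then show that the resulting conditions pin down $k_\ell$ from $(p_{\ell\cdot},\pi_{\ell\cdot})$ alone. Let $(c,g)$ solve \eqref{eq:mainproblem} with $c>0$, and let $A$ be its actual support family. Then $(c,g)$ is also optimal for the restriction of \eqref{eq:mainproblem} to portfolios supported on $A$. So Proposition \ref{prop:kkt} and Theorem \ref{thm:main} apply with this $A$, giving a multiplier $\lambda>0$ and continuation factors $K_\ell^{(U)}>0$. Write $\tau_\ell:=\lambda/K_\ell^{(U)}$.

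\textbf{Prefix structure.} Strict concavity makes $U'$ strictly decreasing. For active $i\in A_\ell$ we have $g_{\ell i}>0$, so $\E_{-\ell}[U'(g_{\ell i}+R_\ell)]<K_\ell^{(U)}$. Then \eqref{eq:activestationarity} gives $r_{\ell i}>\tau_\ell$. For inactive $j$, \eqref{eq:reducedcost} gives $r_{\ell j}\le\tau_\ell$. Hence every active edge ratio strictly exceeds every inactive one. In particular, tied ratios are either all active or all inactive, so in the sorted order $A_\ell=\{1,\dots,k_\ell\}$ for some $k_\ell$. Combining $r_{\ell,k_\ell+1}\le\tau_\ell$ with identity \eqref{eq:identity} yields the stop rule \eqref{eq:stoprule}. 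When $k_\ell<n_\ell$ we have $P_{\ell,k_\ell}<1$, so the identity $\lambda(1-Q)=(1-P)K$ forces $Q_{\ell,k_\ell}<1$ and the ratio is well defined. The same argument also gives the companion condition $r_{\ell k_\ell}>t_{\ell,k_\ell}$, where
\[
  t_{\ell,k}:=\frac{1-P_{\ell,k}}{1-Q_{\ell,k}}.
\]

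\textbf{Utility- and event-independence of $k_\ell$.} The conditions derived above are
\[
  r_{\ell i}>t_{\ell,k_\ell}\quad (i\le k_\ell),\qquad r_{\ell,k_\ell+1}\le t_{\ell,k_\ell}\quad (k_\ell<n_\ell).
\]
They involve only the data of event $\ell$. I would show they determine $k_\ell$ uniquely, using a mediant lemma. Since
\[
  1-P_{\ell,k}=(1-P_{\ell,k+1})+p_{\ell,k+1},\qquad 1-Q_{\ell,k}=(1-Q_{\ell,k+1})+\pi_{\ell,k+1},
\]
the number $t_{\ell,k}$ is a weighted mediant of $t_{\ell,k+1}$ and $r_{\ell,k+1}$, and so lies between them. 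Two consequences follow.
\begin{enumerate}[label=(\roman*)]
\item \emph{The stop condition persists.} If $r_{\ell,k+1}\le t_{\ell,k}$, then $t_{\ell,k+1}\ge t_{\ell,k}\ge r_{\ell,k+1}\ge r_{\ell,k+2}$, so the condition $r_{\ell,k'+1}\le t_{\ell,k'}$ holds for all $k'\ge k$.
\item \emph{No earlier stop.} If $r_{\ell k}>t_{\ell,k}$, then $t_{\ell,k-1}$ lies between $t_{\ell,k}$ and $r_{\ell k}$, so $t_{\ell,k-1}<r_{\ell k}$. Descending inductively, using $r_{\ell,j+1}\ge r_{\ell k}$, one gets $t_{\ell,j}<r_{\ell,j+1}$ for every $j<k$.
\end{enumerate}
Thus any $k$ satisfying both conditions equals the first index at which the stop rule holds. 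That index is unique and depends only on $(p_{\ell\cdot},\pi_{\ell\cdot})$.

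\textbf{Matching the single-event problem.} The single-event problem is the case $m=1$ of the same analysis. By the argument above, its support is that same first stopping index. Therefore the simultaneous support coincides eventwise with the single-event supports, which is the claimed decoupling.

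The main obstacle is this uniqueness/mediant step, including the degenerate cases. One is $Q_{\ell,k}=1$ with $P_{\ell,k}=1$, where $t_{\ell,k}$ is undefined; there I would treat $k=n_\ell$ separately. The other is keeping strict versus weak inequalities straight when ties occur in the ratios.
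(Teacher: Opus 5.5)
Your proof is correct, and up through the stop rule it is the paper's proof. Strict monotonicity of $U'$ gives $r_{\ell i}>\lambda/K_\ell^{(U)}$ on the active set, the reduced-cost inequality gives $r_{\ell j}\le\lambda/K_\ell^{(U)}$ off it, and Theorem~\ref{thm:main} turns the threshold into $(1-P_{\ell,k_\ell})/(1-Q_{\ell,k_\ell})$.

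The real difference is the final step. The paper stops at the remark that \eqref{eq:stoprule} is ``the same stopping rule as in the single-event problem.'' But the weak stop inequality by itself typically holds at several indices, so that remark does not yet show the simultaneous and single-event supports coincide. Your step (ii) closes this gap: the strict active condition at $k_\ell$ plus the stop inequality force $t_{\ell,j}<r_{\ell,j+1}$ for every $j<k_\ell$, so $k_\ell$ is the first index at which the stop rule holds. That uniqueness statement is what turns the decoupling claim into a proof. Your check that $Q_{\ell,k_\ell}<1$ also justifies the division by $1-Q_{\ell,k_\ell}$, which the paper leaves implicit. The paper's version is shorter; yours actually shows that $k_\ell$ depends only on $(p_{\ell\cdot},\pi_{\ell\cdot})$.

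Two corrections. First, step (i) is unnecessary, since uniqueness follows from (ii) alone, and it is false as stated. Reading $t_{\ell,k}$ as a mediant of $t_{\ell,k+1}$ and $r_{\ell,k+1}$ requires $1-Q_{\ell,k+1}>0$, which can fail past the stopping index under overround. For $p=(\tfrac13,\tfrac13,\tfrac13)$ and $\pi=(0.5,0.6,0.6)$ the rule stops at $k=0$, yet $t_{\ell,2}=-\tfrac{10}{3}<r_{\ell 3}$. In (ii) you are safe, because $Q_{\ell,j}\le Q_{\ell,k_\ell}<1$ for all $j\le k_\ell$.

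Second, the case $k_\ell=n_\ell$ cannot be ``treated separately'' to recover uniqueness. With $c>0$ the identity forces $Q_{\ell,n_\ell}=1$. Moving a constant between cash and every outcome of event $\ell$ then leaves all wealths and the budget unchanged, so the full support and the first-stop prefix are both optimal. There the support claim genuinely needs strict overround or the cash-preferring tie-break of Remark~\ref{rem:vigorish}; the paper's proof is silent on this as well.
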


\begin{proof}
Let $A_\ell:=\{i:g_{\ell i}>0\}$. If $i\in A_\ell$, then $g_{\ell i}>0$, so strict concavity implies
\[
  U'(g_{\ell i}+R_\ell(X_{-\ell}))<U'(R_\ell(X_{-\ell}))
\]
for every $X_{-\ell}$. Taking expectations and using \eqref{eq:activestationarity} and \eqref{eq:Kell},
\[
  \frac{\lambda\pi_{\ell i}}{p_{\ell i}}
  =\E_{-\ell}\bigl[U'(g_{\ell i}+R_\ell(X_{-\ell}))\bigr]
  < K_\ell^{(U)}.
\]
Thus every active outcome satisfies
\begin{equation}\label{eq:active-strict}
  r_{\ell i}>\frac{\lambda}{K_\ell^{(U)}}.
\end{equation}
For every inactive outcome $j\notin A_\ell$, the reduced-cost inequality \eqref{eq:reducedcost} gives
\begin{equation}\label{eq:inactive-weak}
  r_{\ell j}\le \frac{\lambda}{K_\ell^{(U)}}.
\end{equation}
Hence, after sorting by decreasing $r_{\ell i}$, the active set of event $\ell$ must be a prefix $\{1,\dots,k_\ell\}$. Applying Theorem \ref{thm:main} to that prefix support yields
\[
  \frac{\lambda}{K_\ell^{(U)}}=\frac{1-P_{\ell,k_\ell}}{1-Q_{\ell,k_\ell}},
\]
so \eqref{eq:inactive-weak} for the next inactive outcome becomes exactly \eqref{eq:stoprule}. This is the same stopping rule as in the single-event problem, so support discovery decouples eventwise.
\end{proof}

\begin{corollary}[Single-event utility-invariant support]\label{cor:singleevent}
For $m=1$, the optimal active support is the greedy prefix determined by
\[
  r_{k+1}\le \frac{1-P_k}{1-Q_k},
\]
independently of the admissible utility $U$.
\end{corollary}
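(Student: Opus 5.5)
The plan is to specialize Corollary \ref{cor:decoupling} to $m=1$ and then show that the resulting stopping rule fixes $k$ uniquely, using only $p$ and $\pi$. For $m=1$ there are no other events, so $R_1 \equiv c$ is deterministic and $K_1^{(U)} = U'(c)$. Take an optimal $(c,g)$ with $c>0$. Corollary \ref{cor:decoupling} then says the active set is a prefix $\{1,\dots,k\}$ of the $r$-sorted outcomes. Combining \eqref{eq:active-strict}, \eqref{eq:inactive-weak} and Theorem \ref{thm:main}, with $t_k := (1-P_k)/(1-Q_k)$, we get
\[
  r_i > t_k \quad (i\le k), \qquad r_j \le t_k \quad (j>k).
\]
Note that $Q_k<1$ whenever $c>0$. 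Indeed, $\lambda(1-Q_k) = (1-P_k)U'(c) \ge 0$ and $\lambda>0$. If $k=n$, the identity forces $Q_n=1$, which then contradicts $\lambda>0$ unless we are in a degenerate case; I would handle that case separately or exclude it.

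The main step is uniqueness: at most one $k$ satisfies both $r_k > t_k$ and $r_{k+1} \le t_k$, and that $k$ is exactly the greedy stopping index. I would use the mediant identity
\[
  t_{k} = \frac{(1-Q_{k+1})\,t_{k+1} + \pi_{k+1} r_{k+1}}{1-Q_{k+1}+\pi_{k+1}}.
\]
This shows $t_k$ is a weighted average of $t_{k+1}$ and $r_{k+1}$. Consequently $r_{k+1} > t_k \iff r_{k+1} > t_{k+1} \iff t_{k+1} < t_k$. So while the sorted $r_{k+1}$ exceeds the current threshold, the threshold strictly decreases and the next ratio is also above the new threshold. Once $r_{k+1} \le t_k$, we have $t_{k+1} \ge t_k \ge r_{k+1} \ge r_{k+2}$, and the condition fails forever after. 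Hence the set of $k$ with $r_{k}>t_{k-1}$ is an initial segment, and the unique $k$ satisfying the two displayed conditions is the first $k$ with $r_{k+1}\le t_k$. That is the greedy prefix.

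Since this $k$ depends only on $(p,\pi)$, the support is independent of $U$. The hard part is the monotonicity bookkeeping just described, namely checking that the greedy rule and the KKT characterization pick the same $k$ even with ties in $r$. Ties are handled by the strict/weak split: tied outcomes fall entirely on one side. I would also state explicitly that the corollary inherits the hypothesis $c>0$ from Corollary \ref{cor:decoupling}.
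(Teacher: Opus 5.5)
Your proof is correct up to two fixable points, and it proves more than the paper's does. The paper's proof is a one-line specialization: $R_1\equiv c$, so $K_1^{(U)}=U'(c)$, and Theorem~\ref{thm:main} together with Corollary~\ref{cor:decoupling} is said to give the cutoff ``immediately.'' Those results only show that the realized support is a self-consistent prefix $\{1,\dots,k\}$, meaning $r_i>t_k\ge r_j$ for $i\le k<j$. Identifying this $k$ with the greedy first-stop index, and hence with something depending only on $(p,\pi)$, tacitly requires that no other $k$ is self-consistent. Your mediant identity supplies exactly that uniqueness step. Your argument is therefore self-contained, whereas the paper's leans on the classical single-event literature.

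\textbf{First point: the mediant identity needs $Q_{k+1}<1$.} Only then is it a convex combination. Beyond that range, $t_{k+1}$ is undefined or negative, and two of your claims are false as stated: that the condition ``fails forever after,'' and that $\{k:r_k>t_{k-1}\}$ is an initial segment. For $p=(1/2,1/4,1/4)$ and $\pi=(2/5,1,1)$, the greedy rule stops at $k=1$ (since $t_1=5/6\ge r_2=1/4$), yet $t_2=-5/8<r_3=1/4$. The repair uses a fact you already derived. Any candidate $k<n$ satisfies $\lambda(1-Q_k)=(1-P_k)U'(c)>0$, so $Q_j<1$ for all $j\le k$. Your monotonicity induction is therefore valid on the only range where it is needed.

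\textbf{Second point: the case $k=n$.} Here the identity gives $Q_n=1$, and this does not contradict $\lambda>0$. It is the fair-book cash-shift degeneracy of Remark~\ref{rem:vigorish}. When $\sum_i\pi_i=1$, shifting $\delta\in(0,c)$ from cash into every outcome leaves the budget and every terminal wealth unchanged. So any optimum with $c>0$ yields another optimum with $c>0$ and every outcome active. Meanwhile the greedy rule stops at some $k\le n-1$, because $t_{n-1}=r_n$. The statement therefore needs either $\sum_i\pi_i\ne 1$ or the cash-preferring representation. In the other cases $k=n$ causes no trouble: for $\sum_i\pi_i>1$ the identity rules out $k=n$, and for $\sum_i\pi_i<1$ no optimum has $c>0$. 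The paper's own proof does not address this case either.
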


\begin{proof}
When $m=1$, one has $R_1\equiv c$ and therefore $K_1^{(U)}=U'(c)$. Theorem \ref{thm:main} and Corollary \ref{cor:decoupling} then yield the stated cutoff immediately.
\end{proof}

\begin{remark}[One-dimensional reduction in the single-event problem]\label{rem:scalar}
Once the single-event support $A$ is known, the active first-order conditions imply
\[
  U'(W_i)=\frac{\lambda}{r_i} \qquad (i\in A),
\]
so
\[
  W_i=(U')^{-1}\!\left(\frac{\lambda}{r_i}\right) \qquad (i\in A).
\]
\[
  \theta_A:=\frac{1-P_A}{1-Q_A},
\]
Theorem \ref{thm:main} gives $\lambda/U'(c)=\theta_A$, hence
\[
  c=(U')^{-1}\!\left(\lambda\frac{1-Q_A}{1-P_A}\right).
\]
The budget constraint becomes a single scalar equation for $\lambda$:
\[
  (1-Q_A)(U')^{-1}\!\left(\lambda\frac{1-Q_A}{1-P_A}\right)
  +\sum_{i\in A}\pi_i (U')^{-1}\!\left(\frac{\lambda}{r_i}\right)=1.
\]
For CRRA utility $U(w)=w^{1-\gamma}/(1-\gamma)$, this equation is explicit.
\end{remark}

\begin{remark}[Log utility and CRRA as special cases]\label{rem:crra}
For $U(w)=\log w$, Theorem \ref{thm:main} recovers the support threshold in Long \cite{Long2026}. For CRRA utility $U(w)=w^{1-\gamma}/(1-\gamma)$ with $\gamma>0$, the theorem shows that the active support is unchanged across the entire CRRA family: risk aversion changes the active weights, but not the active set.
\end{remark}

\begin{remark}[Boundary case $c=0$]\label{rem:boundary}
When $c=0$, cash stationarity becomes one-sided. More precisely, there is a slack term
\[
  \nu:=\lambda-\E\bigl[U'(W(X))\bigr]\ge 0.
\]
Repeating the proof of Theorem \ref{thm:main} gives
\[
  \lambda(1-Q_{\ell,A})=(1-P_{\ell,A})K_\ell^{(U)}+\nu,
\]
so
\[
  \frac{\lambda}{K_\ell^{(U)}}
  =
  \frac{1-P_{\ell,A}}{1-Q_{\ell,A}} + \frac{\nu}{(1-Q_{\ell,A})K_\ell^{(U)}}.
\]
Thus the exact utility-invariant threshold identity is an interior phenomenon. This clarifies why exogenous exposure constraints or fractional-betting constraints generally destroy support invariance.
\end{remark}

\begin{remark}[Independence as the structural boundary]\label{rem:independence}
The proof uses independence at exactly one point: for an inactive outcome $j\notin A_\ell$, the continuation factor
\[
  K_\ell^{(U)}=\E_{-\ell}[U'(R_\ell(X_{-\ell}))]
\]
must be the same for every such $j$. Under dependence, the conditional law of $X_{-\ell}$ given $X_\ell=j$ varies with $j$, so different inactive outcomes see different continuation factors. The conditioning identity \eqref{eq:cashdecomp} then no longer collapses to a single scalar threshold. Independence is therefore not a mere convenience; it is the exact structural boundary of the theorem.
\end{remark}

\begin{remark}[Strict vigorish and canonical uniqueness]\label{rem:vigorish}
Suppose every event has strict overround,
\[
  \sum_{i=1}^{n_\ell}\pi_{\ell i}>1 \qquad \text{for every }\ell.
\]
Then the fair-event cash-shift degeneracy is absent: one cannot move a constant amount from cash into every outcome of a single event without changing the budget. If, in addition, there are no exact threshold ties, then the eventwise prefix support in Corollary \ref{cor:decoupling} is unique. If equality occurs at a threshold, the natural canonical tie-break is to prefer cash, i.e. to treat equality outcomes as inactive.
\end{remark}

\end{document}